\documentclass[11pt,twoside]{amsart}

\usepackage{enumerate}
\usepackage[hyperindex=true,plainpages=false,colorlinks=false,pdfpagelabels]{hyperref}


\theoremstyle{plain}
\newtheorem{The}{Theorem}
\newtheorem*{The*}{Theorem}
\newtheorem{Pro}[The]{Proposition}
\newtheorem{Lem}[The]{Lemma}
\newtheorem{Cor}[The]{Corollary}
\newtheorem*{Cor*}{Corollary}

\theoremstyle{definition}
\newtheorem*{Def}{Definition}

\theoremstyle{remark} 
\newtheorem{Rem}[The]{Remark}
\newtheorem{Exa}{Example}
\newtheorem*{Rem*}{Remark}

\numberwithin{equation}{section}

\DeclareMathOperator{\End}{End}
\DeclareMathOperator{\Hom}{Hom}
\DeclareMathOperator{\GL}{GL}


\DeclareMathOperator{\Id}{Id}

\DeclareMathOperator{\Hess}{Hess}

\DeclareMathOperator{\mH}{\mathcal H}
\DeclareMathOperator{\mJ}{\mathcal J}
\DeclareMathOperator{\ed}{d}
\DeclareMathOperator{\vol}{vol}

\DeclareMathOperator{\grad}{grad}
\DeclareMathOperator{\delr}{\frac{\del}{\del r}}
\DeclareMathOperator{\delphi}{\frac{\del}{\del \varphi}}

\DeclareMathOperator{\del}{\partial}

\newcommand{\R}{\mathbb{R}}
\newcommand{\C}{\mathbb{C}}

\newcommand{\Z}{\mathbb{Z}}

\setlength{\parindent}{0 pt}
\setlength{\parskip}{5 pt}
\addtolength{\textwidth}{6 em}
\addtolength{\oddsidemargin}{-3 em}
\addtolength{\evensidemargin}{-3 em}
\addtolength{\topmargin}{-3 em}
\addtolength{\textheight}{5 em}

\begin{document}

\title[Conformally Flat Circle Bundles over Surfaces]{Conformally Flat Circle Bundles over Surfaces}

\author{Sebastian Heller}

\address{Sebastian Heller\\
  Mathematisches Institut\\
  Universit{\"a}t T{\"u}bingen\\\
  Auf der Morgenstelle 10\\
  72076 T{\"u}bingen\\
  Germany}

\email{heller@mathematik.uni-tuebingen.de}

\subjclass{53C12,53C24,53C43}

\date{\today}

\thanks{The Author would like to thank his supervisor Ulrich Pinkall.}

\thanks{Author supported by GRK 870 ''Arithmetic and Geometry'' and SFB/Transregio 71}

\begin{abstract} 
 We classify conformally flat Riemannian $3-$manifolds  which possesses a free isometric $S^1-$action.
 \end{abstract}

\maketitle

\section{Introduction}
\label{sec:intro}
We consider compact oriented conformally flat Riemannian $3-$manifolds $P$ such that there exists a free isometric circle action $P\times S^1\to P.$ The space of orbits has an unique structure of an oriented
Riemannian manifold $M=P/S^1$ such that the projection $\pi$ is a Riemannian submersion. Then
$S^1\to P\to M$ can be viewed as a principal $S^1-$bundle.
The orthogonal complement $\mathcal H$ of the vertical space $\mathcal V=\ker d\pi$ is a principal connection. The condition that $P$ is conformally flat can be written as a differential equation on $M$ in terms of the curvature function $H$ of this connection and the Gaussian curvature $K$ of $M:$
\begin{equation}
\begin{split}
&\Hess H=H(H^2-K)\Id\\
&2K-3H^2=\alpha,\\
\end{split}
\end{equation}
for some constant $\alpha.$ These equations are strongly related to the geometry of the surface.
For example we prove that around a regular critical point $p$ of $H,$ $H$ itself depends only on the geodesic distance $r=d(.,p)$ from $p.$ It satisfies $cH'(r)=L(r),$ where $c$ is some constant and $L(r)$ is the length of the geodesic circle of distance $r$ around $p.$ Using this we show that the curvature functions $H$ and $K$ must be constant. We give a full classification of conformally flat circle bundles.

\section{Circle Bundles}
First, we shortly describe the spaces we are considering and the situations in which circle bundles occur. Then we recall some formulas for the curvature on a fibered $3-$manifold. For details see \cite{B}.
We use these to compute the absolute exterior derivative of the Schouten tensor, i.e. the obstruction to $(P,g)$ being conformally flat.

We need some basics from the theory of principal bundles, see \cite{KN} or more briefly \cite{Fr}. 
\begin{Def}
A Riemannian manifold $(P,g),$ together with a submersion $\pi\colon P\to M,$ is called circle bundle with circle metric if it is a principal bundle $S^1\to P\to M$ such that $S^1$ acts by isometries.
\end{Def}
For short, we will say that $P$ is a circle bundle.
The proposition below characterizes Riemannian manifolds $P$ occurring as total spaces of circle bundles.

\begin{Pro}
A compact, oriented Riemannian $3-$manifold $P$ for which 
a Riemannian submersion $\pi\colon P\to M$ to an oriented surface $M$ with connected minimal fibers does exist possesses a free isometric circle action $P\times S^1\to P,$ and vice versa.
\end{Pro}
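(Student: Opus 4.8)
The plan is to establish the two implications separately; the content lies almost entirely in the direction from a Riemannian submersion with connected minimal fibres to a free isometric circle action.

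So assume $\pi\colon P\to M$ is such a submersion. Since a minimal submanifold of dimension one is automatically totally geodesic (its second fundamental form is a quadratic form on a line, hence a multiple of the mean curvature), each fibre is a simple closed geodesic. Using the orientations of $P$ and of $M$ one obtains a globally defined smooth unit vertical vector field $U$, the positively oriented unit tangent to the fibres. The first step is to check that $U$ is a Killing field, by splitting the Killing equation $\langle\nabla_X U,Y\rangle+\langle\nabla_Y U,X\rangle=0$ according to $TP=\mathcal V\oplus\mathcal H$: it is vacuous on $\mathcal V\times\mathcal V$; on $\mathcal V\times\mathcal H$ it reduces to $\nabla_U U=0$ (here minimality enters) together with $|U|\equiv 1$; and on $\mathcal H\times\mathcal H$ it holds because, for basic horizontal fields, $\langle\nabla_Z U,W\rangle=-\tfrac12\langle U,[Z,W]\rangle$, which is antisymmetric in $Z,W$. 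The second step is to show that all fibres have a common length $\ell_0$: writing $\omega=U^\flat$, the identity $\mathcal L_U\omega=0$ (valid for the Killing field $U$) together with $\iota_U\omega=|U|^2\equiv 1$ gives $\iota_U d\omega=0$, so $d\omega$ vanishes on every surface of the form $\pi^{-1}(\gamma)$ for a curve $\gamma$ in $M$; since $\ell(q)=\int_{\pi^{-1}(q)}\omega$, Stokes' theorem and connectedness of $M$ force $\ell$ to be constant. (Alternatively, the first variation of the length of the fibres vanishes because they are geodesics and nearby fibres are reached through a normal variation.) Finally, $P$ being compact, the flow $\phi_t$ of $U$ is complete and isometric; its orbits are exactly the fibres, on each of which $\phi_t$ is translation by arclength $t$, whence $\phi_{\ell_0}=\mathrm{id}$ and every point has stabiliser $\ell_0\Z$. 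Thus $\phi$ descends to a free isometric action of $S^1=\R/\ell_0\Z$ on $P$.

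For the converse, let $\mu\colon P\times S^1\to P$ be a free isometric action with fundamental vector field $V$. Since $S^1$ is compact and acts freely on the compact manifold $P$, the quotient $M=P/S^1$ is a smooth $2$-manifold and $\pi\colon P\to M$ is a principal $S^1$-bundle, in particular it has connected fibres; and $M$ is orientable because $P$ is and the vertical line bundle is trivial, being spanned by the nowhere-vanishing field $V$. Pushing the restriction of $g$ to $\mathcal H=\mathcal V^\perp$ down to $M$ is well defined, because the isometric action preserves $\mathcal H$, and makes $\pi$ a Riemannian submersion whose fibres are the $S^1$-orbits. An orbit has mean curvature vector $-\grad\log|V|$, so it is minimal precisely when $|V|$ is constant; this can always be arranged by rescaling $g$ in the fibre direction by $|V|^{-2}$, which leaves the $S^1$-action isometric and, the conformal class being what matters in the sequel, costs nothing.

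The delicate point, on which I would concentrate, is the forward implication: proving that the unit vertical field $U$ really is Killing and that the fibres share one common length, so that the flow of $U$ closes up to a genuine circle action rather than an $\R$-action with orbits of differing periods. Both the orientation hypothesis (needed to define $U$ globally) and the minimality hypothesis (needed for $\nabla_U U=0$) enter essentially here.
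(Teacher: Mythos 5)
Your forward direction is correct and takes essentially the paper's route (flow of the positively oriented unit vertical field $U$, constancy of the fibre length, periodicity), but you supply two verifications the paper leaves implicit: that $U$ is Killing --- the paper only remarks that $\nabla_TT=0$ ``obviously'' gives invariance of the metric, whereas the horizontal--horizontal part of the Killing equation genuinely needs the submersion identity $\mathcal V\nabla_ZW=\tfrac12\mathcal V[Z,W]$ for basic fields, exactly as you use it --- and that all fibres have the same length, where your $\iota_U\,d\omega=0$ plus Stokes argument replaces the paper's appeal to the first-variation interpretation of mean curvature. That half is fine and, if anything, more careful than the original.

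The gap is in the converse. Your own formula --- the mean curvature vector of an orbit is $-\grad\log|V|$ --- shows that for an arbitrary free isometric $S^1$-action the orbits need not be minimal: take $P=M\times S^1$ with $g=h\oplus f^2\,dt^2$ for a non-constant invariant $f>0$. The proposition is a statement about the given metric $g$, so replacing $g$ by its rescaling by $|V|^{-2}$ in the fibre direction produces a different Riemannian manifold and does not prove the asserted implication; and the justification offered --- that only the conformal class matters --- is false, since a rescaling of the fibre direction alone is not a conformal change of $g$, and conformal flatness of $(P,g)$, which is what the rest of the paper is about, is not preserved by it. What is really intended (compare the forward construction, where the action produced has unit fundamental field, and the unit-vectors-in-a-line-bundle description) is that the circle actions under consideration are those whose fundamental vector field has constant norm; with that normalisation the converse is immediate, and without it the ``vice versa'' fails for the quotient map. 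You have correctly located an imprecision in the statement, but the repair must be to add that hypothesis (or restrict the class of actions), not to modify the metric.
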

\begin{proof}
It is a basic property of the mean curvature of a submanifold that it measures the derivative of the volume. In our case of a Riemannian submersion from a Riemannian $3-$manifold to a surface  with compact, connected and minimal fibers, these fibers are all of the same length and diffeomorphic to $S^1.$ 
We assume the length to be $2\pi.$ Let $T$ be the vector field of constant length $1$
in positive fiber direction and consider its flow $\Phi.$ Clearly,
every integral curve of $T$ is closed with period $2\pi,$ hence the flow
induces  a free and proper circle action $$P\times S^1\to P;\ \ (x,e^{i\varphi})\mapsto\Phi_\varphi(x).$$ The minimality of the fibers is given by $\nabla_TT=0,$ which obviously implies that the metric is invariant under the
action.

Conversely, the space of orbits $P/S^1=M$ has an unique structure as a Riemannian manifold $(M,h)$ such that the canonical projection is a Riemannian submersion. The metric $h$ is given by the formula $$h(\ed\pi(X),\ed\pi(Y))=g(X,Y)$$ for horizontal vectors $$X,Y\in\mathcal H:=ker\ed\pi^\perp.$$
The property of $S^1$ acting isometrically on $P$ translates to the minimality of the fibers. 
\end{proof}

We are going to compute the Levi-Civita connection and the curvature of the
total space $P.$ From now on let $T$  be the vector field in positive
fiber direction of constant length $1.$ And let $\hat A,\hat B$ be the horizontal lifts of orthonormal fields $A,B$ of the base $M.$
Then there is a function $\lambda$ such that $e^{\lambda}(A+iB)$ is a holomorphic field on the surface. This condition is equivalent to $[A,B] = B \cdot \lambda A - A \cdot \lambda B.$

The $S^1-$action leaves the metric invariant. Therefore 
the horizontal distribution given by $\mathcal H=\ker\ed\pi^\perp$ is also invariant and gives rise to a principal connection.  The curvature $\Omega$ of this connection is an imaginary
valued $2-$form. It is invariant under $S^1,$ since $S^1$ is abelian.
Therefore, the function $H$ defined by 
\begin{equation}\label{defH}
\Omega=iH\pi^*\vol_M
\end{equation}
is constant along the fibers. Thus it generates a function on the surface, which will be denoted by 
$H,$ too. We denote by $K$ the Gaussian curvature (function) of the surface $M$ with respect to the metric $h.$
\begin{Rem}\label{unitvectors}
There is another possibility to obtain a total space $P$ with $S^1$- invariant metric. Take a complex unitary line bundle $\pi\colon L\to M$ with unitary connection $\nabla$ over a surface $M$ with Riemannian metric $h.$
Let $P$ be the set of unit length vectors in $L.$ Then $S^1\subset\C$ acts on $P$ by scalar multiplication. The
connection $\nabla$ gives rise to a horizontal space $HL\subset TL$ by declaring a germ of a section $[s]$ over a germ of a curve $[\gamma]$ in $M$ to be horizontal if and only if $s$ is parallel along $\gamma$ with respect to $\nabla.$ This induces a splitting $TL=VL\oplus HL,$ where
$VL:=\ker \ed\pi$ is isomorphic to $\pi^*L$ in a canonical way and $HL$ is isomorphic to $\pi^*TM$ via $\ed\pi.$ Therefore we obtain a Riemannian metric on the total space $L$ from the inner product on $L$ and the Riemannian metric on $M.$ We equip $P$ with the induced submanifold metric $g$ which is invariant under $S^1$ by construction. In fact each circle bundle $P$ with circle metric $g$ can be obtained in this way: Take the 
induced metric $h$ on $M$ and the associated unitary line bundle $L:=P\times_\rho\C$ with the induced unitary connection, where $\rho\colon S^1\subset\C^*\to\GL(\C)$ is the standard representation. Then we obtain $P$ by the construction described above. 
\end{Rem}
\begin{Rem}\label{orientation}
Note that a fixed orientation on the surface and a fixed $S^1-$action induces an unique orientation on the total space $P.$ If one wants to change the orientation of $P,$ one has to take the inverse $S^1-$action 
$(p,e^{i\phi})\mapsto pe^{-i\phi}.$ Then the associated unitary  line bundles and connections are dual to each over. Vice versa, let $L$ and $L^*$ be dual unitary line bundles with compatible unitary connections. Then the corresponding total
spaces with induced Riemannian metric are isometric with opposite orientation.
\end{Rem}
For stating the formulas below we will use the endomorphism  $\mJ\in\End(TP)$ given by $X\mapsto T\times X,$ where $\times$ is the cross-product on the oriented Riemannian $3-$space $(P,g).$ In fact $(\mH,\mJ)$ define a CR-structure on $P,$ such that $\ed\pi\colon \mH\to TM$ is complex linear.

With these notations it is simply a matter of computation to obtain:
\begin{Pro}\label{LCC}
Let $\hat A,\hat B,T,\lambda, H$ be defined as above. 
The Levi-Civita connection of the total space of a circle bundle 
$P\to M$ with circle metric 
$g$ 
 is given by the following equations,
\begin{equation*}
\begin{split}
\nabla T & =\frac{1}{2}H\mJ \\
\nabla A&=B\otimes g(.,\frac{1}{2}HT+\mJ\grad(\lambda\circ\pi))
+T\otimes g(.,\frac{1}{2}HB)\\
\nabla B&=-A\otimes g(.,\frac{1}{2}HT+\mJ\grad(\lambda\circ\pi))
-T\otimes g(.,\frac{1}{2}HA).\\
\end{split}
\end{equation*}
\end{Pro}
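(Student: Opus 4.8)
The plan is to compute the Levi-Civita connection directly from the Koszul formula, using the global orthonormal frame $\{T,\hat A,\hat B\}$ of $P$. With the orientation conventions of Remark~\ref{orientation} this frame is positively oriented, so $\mJ$ acts by $\mJ T=0$, $\mJ\hat A=\hat B$, $\mJ\hat B=-\hat A$. Since $\lambda\circ\pi$ is constant along the fibers, $\grad(\lambda\circ\pi)$ is horizontal and equals the horizontal lift of $\grad_M\lambda=(A\cdot\lambda)A+(B\cdot\lambda)B$; hence $\mJ\grad(\lambda\circ\pi)=(A\cdot\lambda)\hat B-(B\cdot\lambda)\hat A$. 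Because all pairwise inner products of the frame are constant, Koszul's formula collapses to
\[
2g(\nabla_XY,Z)=g([X,Y],Z)-g([X,Z],Y)-g([Y,Z],X),\qquad X,Y,Z\in\{T,\hat A,\hat B\},
\]
so the whole statement comes down to knowing the three Lie brackets of the frame.

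First I would record those brackets. Since $T$ generates the circle action (its flow is the $S^1$-action constructed above) and $\hat A,\hat B$ are the horizontal lifts of fields on $M$ (the horizontal distribution $\mH$ being $S^1$-invariant), the fields $\hat A,\hat B$ are themselves $S^1$-invariant; therefore $[T,\hat A]=[T,\hat B]=0$. For $[\hat A,\hat B]$ I split into horizontal and vertical parts. The horizontal part is the horizontal lift of $[A,B]$ on $M$, which by the defining property of $\lambda$ equals $(B\cdot\lambda)\hat A-(A\cdot\lambda)\hat B$. The vertical part is governed by the curvature: writing $\omega$ for the connection $1$-form, $\omega([\hat A,\hat B])=-\ed\omega(\hat A,\hat B)=-\Omega(\hat A,\hat B)=-iH$ by \eqref{defH} and the fact that $A,B$ is positively oriented, and since the fundamental vector field of $i$ in the Lie algebra of $S^1$ is precisely $T$, the vertical component of $[\hat A,\hat B]$ is $-HT$. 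Altogether
\[
[\hat A,\hat B]=(B\cdot\lambda)\hat A-(A\cdot\lambda)\hat B-HT.
\]

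Next I would substitute these three brackets into the Koszul formula and read off each covariant derivative componentwise. This gives $\nabla_TT=0$, $\nabla_{\hat A}T=\tfrac12H\hat B$, $\nabla_{\hat B}T=-\tfrac12H\hat A$, which is exactly $\nabla T=\tfrac12H\mJ$; and $\nabla_T\hat A=\tfrac12H\hat B$, $\nabla_{\hat A}\hat A=-(B\cdot\lambda)\hat B$, $\nabla_{\hat B}\hat A=\tfrac12HT+(A\cdot\lambda)\hat B$, together with the analogous three identities for $\hat B$. Using the formula for $\mJ\grad(\lambda\circ\pi)$ recorded above, one checks that these values are precisely the ones produced by the right-hand sides of the two asserted formulas, which is the claim.

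The only delicate point is sign bookkeeping: the orientation of $P$ (Remark~\ref{orientation}), the sign convention in $\Omega=\ed\omega$ and in \eqref{defH}, the identification of the fundamental vector field of $i$ with $T$, and the definition of $\mJ$ must all be fixed compatibly, so that the vertical component of $[\hat A,\hat B]$ comes out as $-HT$ and not $+HT$, and so that $\mJ\grad(\lambda\circ\pi)$ carries the correct sign. Once the bracket $[\hat A,\hat B]$ is correctly pinned down, the rest is the routine calculation the statement advertises.
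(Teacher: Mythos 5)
Your computation is correct and is precisely the routine verification the paper elides (it offers no proof beyond ``it is simply a matter of computation''): the Koszul formula over the orthonormal frame $\{T,\hat A,\hat B\}$, with $[T,\hat A]=[T,\hat B]=0$ and $[\hat A,\hat B]=(B\cdot\lambda)\hat A-(A\cdot\lambda)\hat B-HT$ coming from the integrability condition on $\lambda$ and from $\omega([\hat A,\hat B])=-\Omega(\hat A,\hat B)=-iH$, reproduces all nine covariant derivatives in the stated formulas. Your sign bookkeeping (orientation of the frame, fundamental vector field of $i$ equal to $T$, and $\mJ\grad(\lambda\circ\pi)=(A\cdot\lambda)\hat B-(B\cdot\lambda)\hat A$) is consistent with the paper's conventions, so nothing is missing.
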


It is well-known that in dimension $3$ the Riemannian curvature tensor $R$ is entirely given by the Ricci tensor $Ric.$
Sometimes it is more convenient to work with the so-called Schouten tensor
$$S:=Ric-\frac{1}{4}scal \Id\in\End(TP)$$
instead of the Ricci tensor. For example we have
$$R=-S\cdot g,$$
where $\cdot$ is the Kulkarni-Nomizu product, and where we consider all tensors to be bilinear or multilinear forms, respectively.

With Proposition \ref{LCC} the curvature is given by
\begin{Pro}
The Schouten tensor of the total space of a circle bundle $P\to M$ with
circle metric $g$ is given by
\begin{equation*}\label{Sch}
\begin{split}
S(T,T)=&(-\frac{1}{2}K+\frac{5}{8}H^2)\\
S(T,X)=&g(-\frac{1}{2}\mJ\grad H,X)\\
S(X,Y)=&(-\frac{3}{8}H^2+\frac{1}{2}K)g(X,Y),\\
\end{split}
\end{equation*}
where $X$ and $Y$ are arbitrary horizontal vectors and $T,H,$ and $K$ are defined as above.
\end{Pro}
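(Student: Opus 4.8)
The plan is to compute the Ricci (equivalently Schouten) tensor directly from the Levi-Civita connection given in Proposition \ref{LCC}, using the standard O'Neill-type formulas for Riemannian submersions, or simply by brute-force evaluation of $R(X,Y)Z = \nabla_X\nabla_Y Z - \nabla_Y\nabla_X Z - \nabla_{[X,Y]}Z$ on the frame $\{\hat A,\hat B,T\}$. Since the statement is flagged as ``simply a matter of computation'', I expect no conceptual subtlety — the work is to organize the calculation so the terms in $\lambda$ cancel as they must (the Schouten tensor should be expressible purely in terms of $H$ and $K$, which are $\lambda$-independent geometric data).

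First I would record the bracket relations: $[\hat A,\hat B] = (B\cdot\lambda)\hat A - (A\cdot\lambda)\hat B - H\,T$ (the horizontal part lifting $[A,B]$ plus the vertical defect measured by the curvature $H$), and $[T,\hat A] = [T,\hat B] = 0$ since the flow of $T$ preserves the horizontal distribution ($S^1$ acts by isometries and is abelian, so it preserves $\mathcal H$, hence Lie-commutes with horizontal lifts). Next I would use Proposition \ref{LCC} to read off $\nabla_T T = 0$, $\nabla_{\hat A}T = \tfrac12 H\hat B$, $\nabla_{\hat B}T = -\tfrac12 H\hat A$, and the analogous expressions for $\nabla\hat A$, $\nabla\hat B$, being careful that $\grad(\lambda\circ\pi)$ is horizontal and that $\hat A\cdot(\lambda\circ\pi) = A\cdot\lambda$, etc. Then I would compute the three independent sectional-curvature-type quantities: $g(R(\hat A,\hat B)\hat B,\hat A)$, $g(R(T,\hat A)\hat A,T)$, and $g(R(T,\hat A)\hat B,T)$ (and the mixed term $g(R(\hat A,\hat B)\hat B,T)$ etc.), from which the full curvature tensor in dimension $3$ is determined.

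From these one assembles the Ricci tensor: $\mathrm{Ric}(T,T) = g(R(\hat A,T)T,\hat A) + g(R(\hat B,T)T,\hat B)$, $\mathrm{Ric}(T,\hat A) = g(R(\hat B,T)\hat A,\hat B)$, and $\mathrm{Ric}(\hat A,\hat A) = g(R(T,\hat A)\hat A,T) + g(R(\hat B,\hat A)\hat A,\hat B)$, using that the base curvature contributes $K$ to the horizontal-horizontal sectional curvature via Gauss's equation downstairs while the vertical $T$-directions contribute the $H^2$ terms (the classical $\pm\tfrac14 H^2$ and $\tfrac34 H^2$ shifts from the connection curvature, exactly as for the Berger-sphere / Hopf-fibration computation). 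The scalar curvature is then $\mathrm{scal} = 2K - \tfrac12 H^2$ (one checks $\mathrm{scal} = \mathrm{Ric}(T,T) + 2\,\mathrm{Ric}(\hat A,\hat A)$), and subtracting $\tfrac14\mathrm{scal}\,\Id$ from $\mathrm{Ric}$ yields the three displayed formulas for $S$.

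The main obstacle — really the only thing requiring care — is the off-diagonal term $S(T,X) = g(-\tfrac12\mathcal J\,\grad H, X)$: this is where the derivatives of $H$ enter, and one must verify that it is precisely $\tfrac12\,dH$ composed with the complex structure, with all the $\lambda$-dependent contributions from $\nabla\hat A$ and $\nabla\hat B$ cancelling against each other and against the bracket terms. I would handle this by writing $\grad H = (A\cdot H)\hat A + (B\cdot H)\hat B$ (the lift of $\grad_M H$, as $H$ is $S^1$-invariant), expanding $g(R(\hat B,T)\hat A,\hat B)$, and tracking that the cross terms between $\tfrac12 H\mathcal J$ and $\mathcal J\grad(\lambda\circ\pi)$ in $\nabla T$ and $\nabla\hat A$ produce exactly the Bianchi-type identity $d(H\,\vol_M) = dH\wedge\vol_M$ (equivalently, differentiating the curvature relation \eqref{defH}). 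Once that term comes out clean, the diagonal terms follow with no surprises, so I would present the off-diagonal computation in detail and merely indicate the diagonal ones.
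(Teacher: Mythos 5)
Your proposal is correct and follows the same route the paper (implicitly) takes: the paper offers no written proof beyond ``with Proposition \ref{LCC} the curvature is given by,'' i.e.\ a direct evaluation of $R$ on the frame $\{\hat A,\hat B,T\}$ using the stated connection and the bracket relations $[T,\hat A]=[T,\hat B]=0$, $[\hat A,\hat B]=(B\cdot\lambda)\hat A-(A\cdot\lambda)\hat B-HT$, followed by tracing to get $\mathrm{Ric}$ and subtracting $\tfrac14\mathrm{scal}\,\Id$. Your sanity checks are consistent (e.g.\ $\mathrm{scal}=2K-\tfrac12H^2$ and the cancellation of the $\lambda$-terms in $g(R(\hat B,T)\hat A,\hat B)=\tfrac12\,\hat B\cdot H$ both come out as claimed), so the plan goes through as written.
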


\section{Conformally Flat Circle Bundles}
A Riemannian manifold $(P,g)$ is conformally flat if there exists a local conformal diffeomorphism into the sphere equipped with its round metric around each point. This is equivalent to the existence of a locally defined function $\lambda$ such that $e^{2\lambda}g$ is flat, see
\cite {KP} or \cite{HJ} for more details.

In case of dimension $2,$ every metric is conformally flat. This is due to the fact that a metric together with an orientation give rise to an almost complex structure, which is already a complex structure  in dimension $2.$ Therefore there exist holomorphic charts, which are of course conformal.

For dimensions $n\geq4$ $P$ is conformally flat if and only if  the Weyl tensor $W,$ defined as the reminder in the general curvature decomposition $$R=-S\cdot g+W,$$ vanishes. The condition in dimension $3$ is of a higher order: Consider the Schouten tensor $S\in\End(TP)$ as a $TP-$valued $1-$form on $P.$ The Levi-Civita connection on $P$ gives rise to the absolute exterior derivative 
$$d^\nabla\colon\Omega^k(P;TP)\to\Omega^{k+1}(P;TP).$$
Then a metric is conformally flat if and only if 
\begin{equation}
d^\nabla S=0.
\end{equation}

Using the formulas for the Levi-Civita 
connection and the Schouten tensor in the propositions \ref{LCC} and \ref{Sch} and the respective notations one easily computes
\begin{equation}
\begin{split}
g(d^\nabla\mathcal S(\hat A\wedge\hat B),T)&=
\frac{1}{2}(-\Delta H+2H(K-H^2)),\\
g(d^\nabla\mathcal S( A\wedge T), B)&=
\frac{1}{2}(\Hess H( \hat A,\hat  A)-H(H^2-K)),\\
g(d^\nabla\mathcal S(B\wedge T),A)&=
\frac{1}{2}(\Hess H(\hat B,\hat B)-H(H^2-K)),\\
g(d^\nabla\mathcal S(\hat B\wedge T),\hat B)&=
-g(d^\nabla\mathcal S(A\wedge T),A)=
\frac{1}{2}\Hess H(\hat A,\hat B).\\
g(d^\nabla\mathcal S(A\wedge B),B)&=
-g(d^\nabla\mathcal S(A\wedge T),T)=
\hat A\cdot(\frac{1}{2}K-\frac{3}{4}H^2)\\
g(d^\nabla\mathcal S(B\wedge A), A)&=
-g(d^\nabla\mathcal S(B\wedge T),T)=
\hat B\cdot(\frac{1}{2}K-\frac{3}{4}H^2).\\
\end{split}
\end{equation}
These equations are defined on the
total space $P.$ But as the fibers are minimal and $H$ and $K$ are constant along
every fiber,  these terms are actually equations on the base. We obtain
\begin{The}\label{cfe}
A circle metric $g$ of a circle bundle $\pi\colon P\to M$ 
over a surface $M$ with Gaussian curvature $K$ and  curvature function $H$ of the horizontal distribution
is conformally flat if and only if
\begin{equation*}\label{hess}
\begin{split}
\Hess H&=\nabla\grad H=H(H^2-K)\Id\\
\end{split}
\end{equation*}
and
\begin{equation*}\label{KHeq}
\begin{split}
2K-3H^2&=\alpha\\
\end{split}
\end{equation*}
are satisfied on $M$ for some constant $\alpha.$
\end{The}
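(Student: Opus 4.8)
The plan is to read off the theorem directly from the six displayed component equations for $d^\nabla\mathcal S$ that precede the statement, so the real content is (i) checking that those six components exhaust the obstruction $d^\nabla S$, (ii) translating the equalities $g(d^\nabla\mathcal S(\cdot\wedge\cdot),\cdot)=0$ into the claimed intrinsic equations on $M$, and (iii) justifying descent from $P$ to $M$. For step (i): $d^\nabla S\in\Omega^2(P;TP)$, and at each point its value on a $2$-plane is a vector in the $3$-dimensional $TP$, so with respect to the orthonormal frame $\hat A,\hat B,T$ the form $d^\nabla S$ is determined by the nine scalars $g(d^\nabla\mathcal S(E_i\wedge E_j),E_k)$ with $i<j$; the skew symmetry of $S$ as a $TP$-valued form together with the second Bianchi-type identity (equivalently the algebraic symmetries quoted in the displayed list via the ``$=-\,$'' equalities) cuts these down precisely to the six listed expressions, so $d^\nabla S=0$ iff all six vanish.

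For step (ii), I would simply set each right-hand side to zero. Vanishing of the third, fourth, and fifth displayed lines gives $\Hess H(\hat A,\hat A)=\Hess H(\hat B,\hat B)=H(H^2-K)$ and $\Hess H(\hat A,\hat B)=0$; since $\hat A,\hat B$ is an arbitrary horizontal orthonormal frame (lifted from an arbitrary orthonormal frame on $M$), this says exactly $\Hess H = H(H^2-K)\,\Id$ on $M$ — note this simultaneously forces $\Delta H = 2H(H^2-K)$, which makes the first displayed line redundant, a consistency check worth mentioning. Vanishing of the last two lines says $\hat A\cdot(\tfrac12 K-\tfrac34 H^2)=\hat B\cdot(\tfrac12 K-\tfrac34 H^2)=0$ for all horizontal directions, i.e. the function $\tfrac12 K-\tfrac34 H^2$ has vanishing gradient on $M$, hence is locally constant; multiplying by $4$ gives $2K-3H^2=\alpha$ with $\alpha$ constant (constant on each connected component, and one may assume $M$ connected). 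Conversely, if the two displayed equations of the theorem hold, all six components vanish, so $d^\nabla S=0$.

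For step (iii), the descent: $H$ and $K$ are already functions on $M$ by construction (established before Proposition \ref{LCC} and in the definition of $K$), the fibers are minimal, and $\hat A,\hat B$ are horizontal lifts, so $\Hess H(\hat A,\hat B)$, $\Delta H$, and the horizontal derivatives $\hat A\cdot f$ of a pulled-back function $f=f\circ\pi$ are themselves pullbacks of the corresponding objects on $(M,h)$ — here one uses that for the Riemannian submersion $\pi$ the horizontal Hessian of a basic function equals the pullback of the Hessian downstairs (the correction terms involve the second fundamental form of the fibers, which vanishes by minimality, plus a vertical component that is killed because $\grad H$ is basic). So each of the six scalar equations on $P$ is constant along fibers and equivalent to an equation on $M$. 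I do not expect a genuine obstacle here; the only mildly delicate point is step (i) — being sure that the six listed components really are all of $d^\nabla S$ and that no further independent component (e.g. $g(d^\nabla\mathcal S(\hat A\wedge\hat B),\hat A)$ type, or the ``diagonal'' $T$-$T$ piece) has been omitted — but this is handled by the dimension count above together with the antisymmetry $g(d^\nabla\mathcal S(X\wedge Y),Z)=-g(d^\nabla\mathcal S(X\wedge Z),Y)$ (the defining property of $d^\nabla$ applied to the symmetric tensor $S$), which is exactly what the ``$=-\,$'' entries in the displayed list record.
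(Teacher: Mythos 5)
Your overall route is the same as the paper's: the proof of Theorem \ref{cfe} in the text consists precisely of the displayed computation of the six scalar components of $d^\nabla S$ in the frame $\hat A,\hat B,T$, followed by the observation that minimality of the fibers and the fiberwise constancy of $H$ and $K$ let these descend to equations on $M$. Your steps (ii) and (iii) carry this out correctly, including the useful observation that the $\Delta H$ equation is the trace of the Hessian equation.

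There is, however, one genuine flaw, in your step (i). The identity you invoke to cut the nine frame components down to six, namely
$g(d^\nabla S(X\wedge Y),Z)=-g(d^\nabla S(X\wedge Z),Y)$, is false: the Cotton tensor $C(X,Y,Z)=(\nabla_XS)(Y,Z)-(\nabla_YS)(X,Z)$ is antisymmetric only in its first two arguments, not in the last two. Indeed, your claimed identity would force $g(d^\nabla S(\hat A\wedge T),T)=0$ identically, whereas the paper's fifth displayed line gives $g(d^\nabla S(\hat A\wedge T),T)=-\hat A\cdot(\tfrac12 K-\tfrac34H^2)$, which is nonzero exactly when $2K-3H^2$ fails to be constant --- i.e.\ your identity would make the second equation of the theorem vacuous. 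The correct reason the three ``extra'' components are determined by the listed ones is the trace-freeness of $C$: for the Schouten normalization the contracted second Bianchi identity gives $\operatorname{div}S=\tfrac14\,\ed(\operatorname{scal})=\ed(\Tr S)$ in dimension $3$, whence $\sum_i C(E_i,Y,E_i)=0$ and $\sum_i C(X,E_i,E_i)=0$ for any orthonormal frame $E_i$. These two trace identities (together with $C(X,X,Z)=0$) are exactly what produce the ``$=-$'' identifications in the paper's list, e.g.\ $C(\hat B,T,\hat B)=-C(\hat A,T,\hat A)$ and $C(\hat A,\hat B,\hat B)=-C(\hat A,T,T)$, and they reduce the nine components to six (the cyclic identity $C(X,Y,Z)+C(Y,Z,X)+C(Z,X,Y)=0$, which does hold by symmetry of $S$, is the only further relation). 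So your conclusion that the six listed expressions exhaust the obstruction is correct, but the justification must go through the Bianchi/trace argument (or a direct computation of all nine components, which is what the paper implicitly does); as written, the step would fail.
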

\begin{Rem}
The trace of the first equation has the same shape as the Willmore equation 
$$\Delta H+2H(H^2-K)=0$$
of an immersed surface in $\R^3$ with mean curvature function $H$ and Gaussian curvature function $K.$
\end{Rem}

\section{The Equations of Conformally Flatness}
We are going to illustrate how the equations of being conformally flat reduces to
an ODE here.

In the following we set $U:=\{x\in M\mid\grad H\neq0\}.$
\begin{Lem}\label{fundamental}
Given a function $H$ on a surface $M$ with Riemannian metric satisfying  $\Hess H=f\Id.$  
Then, for each point $p$ with $\grad_pH\neq0,$ there exist an open set
$V\subset U$ and a conformal chart $(x,y)\colon V\to\R^2$ such that the
metric $g$ on $V\subset M$ and the function $H$ does only depend on $x.$
\end{Lem}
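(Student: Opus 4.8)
The plan is to exploit the equation $\Hess H = f\,\Id$ to show that the gradient flow of $H$ is, after reparametrisation, geodesic, and that $H$ is constant on the level sets of... well, on the hypersurfaces orthogonal to $\grad H$; this is the classical Obata-type rigidity argument adapted to a surface. First I would work near $p$ where $\grad_pH \neq 0$, so that on a small neighbourhood $V$ the level curves $\{H = c\}$ are smooth embedded curves and $\grad H$ is nowhere zero. I would then choose coordinates adapted to this foliation: take $x$ to be (a function of) $H$ itself, and $y$ to be a coordinate along the level curves, chosen so that $\partial/\partial y$ is tangent to $\{H=\text{const}\}$.

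The key computation is to show that in suitable such coordinates the metric is diagonal, i.e. $g = E\,dx^2 + G\,dy^2$ with $E,G$ functions of $x$ alone. For the off-diagonal term: writing $N = \grad H/|\grad H|$, the identity $\Hess H = f\,\Id$ gives $\nabla_X \grad H = f X$ for all $X$, so $\grad H$ is a closed conformal (concircular) vector field. From this one deduces that the flow lines of $N$ are geodesics — indeed $\nabla_{\grad H}\grad H = f\,\grad H$, so the integral curves of $\grad H$ are pregeodesics — and that $N$ is orthogonal to the level curves by definition. Reparametrising the flow parameter to arclength $x = x(r)$ (so that $\partial/\partial x = N$ after scaling), the Gauss lemma-type argument shows $g(\partial_x,\partial_y) = 0$ once $y$ is transported along the geodesics normal to a fixed level curve. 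For the $x$-dependence of $G$: the normal derivative of $g(\partial_y,\partial_y)$ is governed by the second fundamental form of the level curves, which is $\Hess H$ restricted and rescaled, hence proportional to $\Id$ with a factor depending only on $H$, i.e. only on $x$; integrating shows $G = G(x)$. Finally $H$ depends only on $x$ by construction, and the chart $(x,y)$ is conformal because $g = E(x)\,dx^2 + G(x)\,dy^2$ can be brought to the form $e^{2\mu(x)}(dx^2 + dy^2)$ by a further change $x \mapsto \int \sqrt{E/G}\,dx$.

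I expect the main obstacle to be the bookkeeping that makes precise the claim that the second fundamental form of the level sets is pointwise a multiple of the identity depending only on $H$: one must disentangle the intrinsic geometry of the level curve from the normal data, using $\Hess H(\partial_y,\partial_y) = f\, g(\partial_y,\partial_y)$ together with $N\cdot H = |\grad H|$ and the fact that $f$ and $|\grad H|$ are themselves functions of $H$ alone (the latter because $N\cdot(|\grad H|^2) = 2\Hess H(N,\grad H) = 2f|\grad H|$, while along level curves $|\grad H|^2$ is constant, forcing $|\grad H| = \phi(H)$). Once these dependencies are established, the ODE for $G$ in the $x$-variable closes up and the conformal normalisation is routine.
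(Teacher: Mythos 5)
Your argument is correct and follows essentially the same route as the paper: both rest on the observation that $\nabla\grad H=f\,\Id$ makes the normalized gradient flow geodesic and forces $f$ and $\|\grad H\|$ to be constant along the level curves of $H$, so that the metric takes a warped-product form transverse to the level sets. The only cosmetic difference is that the paper produces the conformal chart directly by finding an integrating factor $l$ with $[lX,lY]=0$ for the frame $X=\grad H/\|\grad H\|$, $Y=\mJ X$, whereas you first build semi-geodesic coordinates $g=dx^2+G(x)\,dy^2$ and then conformally reparametrize $x$; both steps are routine and equivalent.
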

\begin{proof}
Define on $U$ the vector fields $$X=\frac{1}{\parallel\grad H\parallel}\grad H$$ 
and $Y=\mJ X.$ Here $\mJ$ is the complex structure induced by $g$
and by the orientation, i.e.  $X,Y$ should form a positive oriented
orthonormal basis.
We set $$h=\ed H(X)$$ which is equivalent to $\grad H=hX.$ Using 
$\Hess H=f\Id$ and $\parallel X\parallel =1$ we get
\begin{equation}\label{nablaX}
\begin{split}
f&=\ed h(X),\\
0&=\ed h(Y),\\
\nabla X&=\frac{f}{h}Y\otimes g(.,Y),\\
\nabla Y&=-\frac{f}{h} X\otimes g(.,Y).\\
\end{split}
\end{equation}
We claim the existence of a locally defined function 
$l\colon V'\subset U\to \R$ 
such that $[lX,lY]=0.$ In this case $lX,lY$ is the Gaussian basis fields of a 
conformal chart $(x,y)\colon V\subset V'\to\R^2.$ 
With 
\begin{equation*}
\begin{split}
[X,Y]&=\nabla_XY-\nabla_YX=-\frac{f}{h}Y\\
\end{split}
\end{equation*} 
we get
\begin{equation*}
\begin{split}
Y\cdot f&=Y\cdot X\cdot h=X\cdot Y\cdot h-[X,Y]\cdot h=0\\
\end{split}
\end{equation*}
and
\begin{equation*}
\begin{split}
[lX,lY]&=l^2[X,Y]-(Y\cdot l)lX+(X\cdot l)lY\\
&=(-\frac{f}{h}l+(X\cdot l))lY-
(Y\cdot l)lX.\\
\end{split}
\end{equation*}
We denote by $\omega_1=g(.,X)$ and 
$\omega_2=g(.,Y)$ the dual basis of $X$ and $Y.$ 
Then $[lX,lY]=0$ is equivalent
to 
\begin{equation*}
\begin{split}
\ed l&=l\frac{f}{h}\omega_1.\\
\end{split}
\end{equation*}
As $\ed\frac{f}{h}\omega_1=0$ there is  a solution $q$ of $\ed q=\frac{f}{h}\omega_1$
on each simply connected open set 
$V'\subset U.$  Then $l=e^q$ is a nowhere vanishing 
solution of $\ed l=l\frac{f}{h}\omega_1.$ 
We obtain a conformal chart $(x,y)\colon V\subset V'\to\R^2$ with Gaussian 
basis fields $lX,lY.$ The metric is given by
$$g=l^2(dx\otimes dx+dy\otimes dy),$$ which only depends on $x,$ since
$\frac{\del l}{\del y}=l(Y\cdot l)=0.$ 
\end{proof}

\begin{Rem}\label{chart}
From the proof of this lemma we also get the existence of a chart 
$(x,y)\colon V\to\R^2$ with Gaussian basis fields $X,lY,$ with
$l$ satisfying $\ed l=\frac{f}{h}l\omega_1,$ such that the metric $g$ and the 
function $H$ only depend on $x.$  
\end{Rem}

We restrict our attention to the case where the function $f$ is
given by Theorem 
\ref{cfe}.
\begin{Lem}\label{criticalpoints}
Let $H$ be a non-constant solution of 
\begin{equation}\label{hess1}
\Hess H=H(H^2-K)=-\frac{1}{2}(H^3+\alpha H)
\end{equation}
for some constant $\alpha.$ Then every critical point $p$ of $H$, for which an
integral curve $\gamma$ of $\grad H$ with $p=\lim_{t\to\pm\infty}\gamma(t)$ does exist, is a regular critical
point. 
\end{Lem}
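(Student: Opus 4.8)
The plan is to show that near such a critical point $p$ the Hessian of $H$ is nondegenerate, which is precisely what ``regular critical point'' means. First I would localize: by Lemma \ref{fundamental} and Remark \ref{chart}, away from critical points the metric and $H$ depend on a single coordinate $x$, and the integral curve $\gamma$ of $\grad H$ is (a reparametrization of) an $x$-coordinate line. Since $p = \lim_{t\to\pm\infty}\gamma(t)$, the function $H$ is monotone along $\gamma$ and the value $H(p)$ is a limit of $H$ along $\gamma$; call it $c$. At $p$ we have $\grad_p H = 0$, and because $\grad H = hX$ with $h = \ed H(X)$ only depending on $x$, I would track the ODE that $h$ satisfies along $\gamma$: from \eqref{nablaX} one has $f = \ed h(X)$, i.e. $h\, h' = f(H)$ along the flow, where by \eqref{hess1} $f(H) = -\tfrac12(H^3 + \alpha H)$. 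Thus $\tfrac12 (h^2)' = -\tfrac12(H^3+\alpha H)$ as functions of $x$, together with $H' = h$.

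Next I would extract the behavior at the limit point. Along $\gamma$ the pair $(H,h)$ solves the planar autonomous system $H' = h$, $h' = -\tfrac12(H^2 + \alpha)$ (dividing the energy relation by $h$, valid on $U$), and the critical point $p$ corresponds to the equilibrium $(H,h) = (c,0)$. Being an $\omega$- or $\alpha$-limit of a nonconstant orbit of an autonomous planar ODE forces $(c,0)$ to be an equilibrium, hence $-\tfrac12(c^2+\alpha) = 0$, i.e. $\alpha = -c^2$; moreover it cannot be a center (nonconstant orbit limits to it), so the linearization $\begin{pmatrix} 0 & 1 \\ -c & 0\end{pmatrix}$ must have a real eigenvalue, forcing $c \neq 0$ and in fact $c < 0$. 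The key consequence is $f(c) = c(c^2 - K(p))$; combined with $\alpha = -c^2$ and $2K - 3H^2 = \alpha$ evaluated at $p$ (which gives $2K(p) = 3c^2 + \alpha = 2c^2$, so $K(p) = c^2$), I get $f(c) = c(c^2 - c^2) = 0$, consistent — so I instead read off the Hessian directly.

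Finally, to see $p$ is a \emph{regular} (nondegenerate) critical point I compute $\Hess_p H$. Away from $p$, $\Hess H = f(H)\,\Id$ with $f(H) = H(H^2 - K)$; taking the limit as we approach $p$ along $\gamma$ and using continuity of $\Hess H$, $H$, and $K$, we get $\Hess_p H = f(c)\,\Id = c\,(c^2 - K(p))\,\Id$. The content is then that this is \emph{nonzero}: if $c^2 = K(p)$ then $\alpha = 2K(p) - 3c^2 = -c^2$ and one checks, by differentiating the energy relation once more or by examining the second-order Taylor expansion of $H$ along $\gamma$, that $H$ would be forced to be constant near $p$ (the equilibrium $(c,0)$ would be degenerate in a way incompatible with a nonconstant limiting orbit), contradicting the hypothesis. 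Hence $\Hess_p H = c(c^2 - K(p))\,\Id$ is a nonzero multiple of the identity, so $p$ is a nondegenerate critical point, i.e. regular. I expect the main obstacle to be the last step: carefully justifying that a nonconstant integral curve of $\grad H$ can only limit to $p$ when the linearized system at $(c,0)$ is hyperbolic (equivalently $c<0$ and $c^2 \neq K(p)$ cannot both fail), which is where the interplay between the ODE \eqref{hess1}, the constraint $2K - 3H^2 = \alpha$, and the geometry of the coordinate $x$ must be used.
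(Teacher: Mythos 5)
Your argument contains a genuine gap, and its middle step actually derives the \emph{negation} of what the lemma asserts. The claim that $(c,0)$ must be an equilibrium of the planar autonomous system rests on a confusion of parametrizations. The system $H'=h$, $h'=-\tfrac12(H^3+\alpha H)$ (note: not $h'=-\tfrac12(H^2+\alpha)$ as you wrote) is autonomous in the \emph{arc-length} parameter $x$ of the geodesic underlying the flow line, and in that parameter the orbit reaches $(c,0)$ at a \emph{finite} value $x=b$ (the geodesic distance from $q$ to $p$, which is how the paper sets up $\gamma\colon[0;b]\to M$). A point reached at finite time need not be an equilibrium --- and indeed the whole content of the lemma is that $(c,0)$ is \emph{not} one, since the equilibrium condition $c^3+\alpha c=0$ is exactly the statement $\Hess_pH=-\tfrac12(c^3+\alpha c)\Id=0$, i.e.\ that $p$ is singular. (The limit $t\to\pm\infty$ in the hypothesis refers to gradient-flow time; rewritten in that variable the system degenerates along $\{h=0\}$, every point of which is an equilibrium, so again nothing about $c$ follows.) Your chain $\alpha=-c^2$, $K(p)=c^2$, $f(c)=0$ is therefore precisely the assertion that $p$ is a degenerate critical point, which is why you found yourself having to backtrack.

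The correct mechanism --- which you gesture at in your last paragraph but do not prove (``one checks \dots that $H$ would be forced to be constant near $p$'') --- is the entire content of the lemma, and it is supplied by ODE uniqueness rather than by phase-plane or Taylor-expansion considerations. Along the unit-speed geodesic $\gamma\colon[0;b]\to M$ from $q$ to $p$ the function $H(x)=H\circ\gamma(x)$ satisfies $2H''+H^3+\alpha H=0$ with terminal condition $H'(b)=0$. If $p$ were a singular critical point one would additionally have $H^3(b)+\alpha H(b)=0$, so the constant function $H\equiv H(b)$ solves the same second-order terminal-value problem; Picard--Lindel\"of then forces $H\circ\gamma$ to be constant, contradicting $H(q)\neq H(p)$. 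This is the paper's proof. Your proposal circles around it but never lands on the uniqueness argument, and the ``equilibrium'' detour that replaces it is invalid.
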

\begin{proof}
By assumption, there is a point $q\in U$ near $p$ 
with $H(q)\neq H(p),$ such 
that the integral curve of $\grad H$ (if $H(q)<H(p))$ or of $-\grad H$ (if
$H(q)>H(p))$ passing through $q$ is going to $p.$ Consequently, 
the integral curve of $X$ or $-X$ is a geodesic converging to $p,$ too.
Let $\gamma\colon [0;b]\to M$ be the geodesic with $\gamma(0)=q, 
\gamma'(0)=X(q)$ and $\gamma(b)=p.$ Consider the function
$H(x):=H\circ\gamma(x).$ Because of equations \ref{nablaX} and \ref{hess1} it satisfies the ODE
\begin{equation}\label{ode}
2H''(x)+H^3(x)+\alpha H(x)=0
\end{equation}
with final value $H'(b)=0.$ 
If $p$ would be a singular critical point of $H,$  we would have 
$H^3(b)+\alpha H(b)=0.$ By Picard-Lindel\"of, its only solution would be constant contradicting $H(p)\neq H(q).$
\end{proof}

The geodesic polar coordinates around $p\in M$ are defined to be the
composition of the inverse of the exponential map at $p$
and the Euclidean polar coordinates of $T_pM$ with respect to some orthonormal basis.
In the case of a surface, we denote these coordinates by 
$(r,e^{i\varphi}),$ where the image of the polar coordinates
is $\R^{>0}\times S^1.$

\begin{Pro}\label{criticalpoint}
Let $(r,e^{i\varphi})\colon V\to ]0;R[\times S^1$ be geodesic polar 
coordinates around a regular critical point $p$ of a function $H$ which solves
the equations in Theorem \ref{cfe}.
Then the metric is locally given by
\begin{equation}\label{polarmetric}
g=dr^2+(\frac{L}{2\pi})^2d\varphi^2.
\end{equation}
Moreover, $L$ and $H$ do only depend on $r$ with $L(r)=cH'(r)$ for some constant $c\neq0.$
\end{Pro}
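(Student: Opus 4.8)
The plan is to combine Lemma \ref{fundamental} (and Remark \ref{chart}) with the rotational symmetry forced by having a genuine critical point at the center of the polar coordinates. First I would work on the punctured neighborhood $V\setminus\{p\}$, which lies in $U$ since $p$ is the only critical point nearby (an integral curve of $\grad H$ limits onto it, so near $p$ the gradient is nonzero away from $p$). On this region the fields $X=\grad H/\|\grad H\|$ and $Y=\mJ X$ of Lemma \ref{fundamental} are defined, and equations \eqref{nablaX} show that the integral curves of $X$ are unit-speed geodesics along which $H$ satisfies the ODE \eqref{ode}. Because $p$ is a regular critical point, standard ODE theory (Picard–Lindel\"of, continuous dependence on initial data) shows that all these geodesics emanate from $p$: they are exactly the radial geodesics $\varphi=\text{const}$ of the polar chart. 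Hence $X=\partial/\partial r$, and the metric in polar coordinates takes the Gauss-lemma form $g=dr^2+f(r,\varphi)^2 d\varphi^2$ for some positive function $f$ with the usual asymptotics $f\to 0$, $\partial_r f\to 1$ as $r\to 0$.

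Next I would show $H$ and $f$ depend only on $r$. Since $X=\partial_r$, the relation $dH(Y)=0$ from \eqref{nablaX} says $\partial_\varphi H=0$, so $H=H(r)$. For $f$: Remark \ref{chart} gives, locally away from $p$, that the metric depends only on the $H$-coordinate, i.e.\ only on $r$; alternatively, one computes that the geodesic curvature of the circles $r=\text{const}$ equals $\partial_r f/f$, and by the first equation of \eqref{nablaX} together with $\nabla Y=-\tfrac{f}{h}X\otimes g(\cdot,Y)$ this curvature is $-f_{\text{here}}/h$ which is a function of $H$ alone, hence of $r$ alone; integrating in $\varphi$ shows $f=f(r)$ up to a multiplicative function of $\varphi$, which the smoothness at $p$ forces to be constant. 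Writing $L(r):=2\pi f(r)$ for the length of the geodesic circle of radius $r$ then gives \eqref{polarmetric}.

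Finally, the relation $L=cH'$. With $g=dr^2+(L/2\pi)^2 d\varphi^2$ and $H=H(r)$, one has $\grad H=H'(r)\,\partial_r$, so $\|\grad H\|=H'$ (up to sign) and $h=H'$. The Hessian equation $\Hess H=f\Id$ of Theorem \ref{cfe}, evaluated on the unit field $\partial_\varphi/(L/2\pi)$, yields $\Hess H(\partial_\varphi,\partial_\varphi)/(L/2\pi)^2 = f$; but $\Hess H(\partial_\varphi,\partial_\varphi)=-\Gamma^r_{\varphi\varphi}\partial_r H = \tfrac{1}{2}(L/2\pi)^2{}'\cdot H' \cdot(2\pi/L)^2\cdot\ldots$ — concretely, $\Hess H(\partial_\varphi,\partial_\varphi)=\tfrac{L L'}{(2\pi)^2}H'$, so $f=\tfrac{L'}{L}H'$. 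On the other hand evaluating on $\partial_r$ gives $f=H''$. Combining, $L'/L=H''/H'$, i.e.\ $(\log L)'=(\log H')'$, hence $L=cH'$ for a constant $c$; and $c\neq0$ because $H$ is non-constant, so $H'\not\equiv0$, while $L>0$ on $]0,R[$. I expect the main obstacle to be the bookkeeping around $r=0$: showing rigorously that the $X$-geodesics fill out precisely the radial rays and that the angular function multiplying $f$ is constant, which is where the regularity of the critical point and the smooth structure at $p$ must be used carefully rather than just formally.
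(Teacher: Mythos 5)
Your proposal is correct and follows essentially the same route as the paper: identify the integral curves of $X$ with the radial geodesics of a normal neighborhood of the non-degenerate critical point (so $X=\partial/\partial r$), deduce $H=H(r)$ from $dH(Y)=0$, and obtain $L=cH'$ from $L'/L=f/h=H''/H'$. The only difference is cosmetic and lies in how rotational symmetry of the metric is established: the paper derives $L'=\tfrac{f}{h}L$ via Stokes' theorem and identifies $lY$ with the Jacobi field $\partial/\partial\varphi$ by comparing initial values at $p$, whereas you use the geodesic curvature of the distance circles together with the Gauss-lemma asymptotics at $r=0$ --- equivalent computations.
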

\begin{proof}
First we show that the integral curves of 
$X=\frac{\grad H}{\parallel\grad H\parallel}$ and $Y=\mJ X$ near critical 
points coincide with the coordinate lines of a geodesic polar coordinate system.
We have proven that $H$ has only regular critical points. Thus we can assume
$p\in M$ to be a non-degenerate local minimum. There exists a
neighborhood $V\subset U\subset M$ of $p$ such that every integral
curve of $-X=-\frac{1}{\parallel\grad H\parallel}\grad H$ starting at a point 
$q\in V\setminus\{p\}$ goes to $p$ in finite time. As the integral
curves of $-X$ are geodesics by \ref{nablaX}, there is
a normal neighborhood $V$ of $p,$ i.e. a set which is diffeomorphic to an
open set in $T_pM$ via exponential map. 
Thus, every geodesic emanating from $p$ is an integral
curve of $X$ for small $t>0.$\\

We claim that the chart given by remark \ref{chart} is the same as the
geodesic polar coordinate system.
The proof of lemma \ref{criticalpoints} yields that the value
$H(q)$ for $q\in V$ depends only on the length of the integral curve of
$-X$ between $q$ and $p.$ And since these integral curves are geodesics
in a normal neighborhood its length is equivalent to the distance $d(p,q)$ between $p,q\in M.$ 
Altogether we have that the integral curves of $X$ and $Y$ are the coordinate lines of the
geodesic polar coordinates, and that $X=\delr.$\\

Let $r=d(.,p)\colon V\setminus\{p\}\to\R$ be the distance function
centered at $p.$
Consider the length function $$L(r)=\int_{\gamma_r}g(.,Y)$$ of
circles with radius $r$ around $p.$ By using the notations and results of lemma \ref{fundamental} we get
\begin{equation*}
\ed g(.,Y)=\ed\omega_2=\frac{f}{h}\vol_M. 
\end{equation*}
Applying Stokes theorem and the fact that $f$ and $h$ are constant along the circles around $p$ we obtain
\begin{equation}
L'=\frac{f}{h}L.
\end{equation}
This shows that the integrability factor $l$ with $[X,lY]=0$ 
and the length function $L$ are the same up to a constant. 
We may fix this constant to be $2\pi,$ i.e. $2\pi l=L.$
By remark \ref{chart}, $lY$ is a Killing field and therefore a Jacobi 
field along every integral curve of $X.$ Fix a geodesic $\gamma$ emanating
from $p.$ Then $lY(r):=(lY)\circ\gamma(r)$ has the same initial values
as the Jacobi field $\delphi.$ With $2\pi l=L$ and $\parallel Y\parallel=1$ we 
have $lY(0)=0.$ Moreover, $$lY'=\nabla_XlY=\frac{f}{h}lY,$$ and 
 together with $L'(0)=2\pi$ it implies $lY'(0)=\mJ\gamma'(0).$ This yields 
$\delphi=lY.$\\

It remains to show that there is a constant $c\neq0$ such that $l(r)=cH'(r).$ We have that $f$ and $h$ are locally given by $h(r)=H'(r)$ and 
$f(r)=h'(r)=H''(r).$ Thus all non-vanishing solutions of $$l'=\frac{f}{h}=H''/H'l$$ are of the shape
$cH'$ for a constant $c.$
\end{proof}
\begin{Cor}\label{regularcriticalpoints}
Every non-constant solution $H$ of equation \ref{hess1} on a compact surface has regular critical points only.
\end{Cor}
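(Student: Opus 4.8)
The plan is to reduce to Lemma~\ref{criticalpoints} by producing, for any critical point $p$ of a non-constant solution $H$ on a compact surface, an integral curve of $\grad H$ whose limit as the parameter tends to $\pm\infty$ is $p$. Once such a curve exists, Lemma~\ref{criticalpoints} immediately gives that $p$ is regular, which is exactly the assertion. So the whole content is an existence statement about integral curves of $\grad H$ running into the critical set, and this is where the compactness of $M$ must be used.

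First I would observe that $H$, being continuous on the compact surface $M$, attains its maximum and its minimum; since $H$ is non-constant, $H_{\min} < H_{\max}$, and both extremal values are attained at critical points. Let $p$ be an arbitrary critical point of $H$. I would then take a gradient flow line through a nearby regular point. Concretely: because $H$ is non-constant, $U = \{\grad H \neq 0\}$ is a non-empty open set whose boundary consists of critical points, and near $p$ one can find $q \in U$. Consider the maximal integral curve $\gamma$ of $\grad H$ (or of $-\grad H$) through $q$. Since $M$ is compact the flow is complete, so $\gamma$ is defined for all $t \in \R$. Along $\gamma$ the value $H\circ\gamma$ is strictly increasing (respectively decreasing) and bounded, hence converges; the $\omega$-limit set (and $\alpha$-limit set) of $\gamma$ is a non-empty compact connected set on which $\grad H$ vanishes, i.e. it consists of critical points, all with the same value of $H$ (the limit of $H\circ\gamma$).

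The remaining point is to arrange that one of these limit sets is the single point $p$ — or more precisely, to get Lemma~\ref{criticalpoints} to apply to \emph{every} critical point, not just to the limit points of one particular flow line. I would handle this by running the flow line issuing from $p$ itself in the unstable direction: every critical point $p$ lies in the closure of $U$ (if $p$ were isolated from $U$, then $\grad H \equiv 0$ on a neighbourhood, forcing $H$ constant near $p$, and by the ODE \ref{ode} along geodesics through $p$, together with unique continuation / the connectedness of $M$, $H$ would be globally constant, a contradiction). Picking $q_n \in U$ with $q_n \to p$ and following $-X = -\grad H/\|\grad H\|$ from each $q_n$, the arc length to the critical set tends to $0$, so $p$ itself is a limit of a gradient trajectory, i.e. there is an integral curve $\gamma$ of $\grad H$ with $p = \lim_{t\to\pm\infty}\gamma(t)$. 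Then Lemma~\ref{criticalpoints} applies verbatim and shows $p$ is a regular critical point. Since $p$ was arbitrary, $H$ has only regular critical points.

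The main obstacle I anticipate is the dichotomy just sketched: showing that a critical point $p$ with $H(p)$ \emph{not} extremal, and not a priori isolated, is still approached by a genuine gradient trajectory rather than merely lying in some large limit set. The clean way around it is exactly to invoke that near such a $p$ the set $U$ is non-empty (else $H$ is locally constant, hence by the rigidity of the ODE \ref{ode} globally constant), and then to use the completeness of the gradient flow on the compact $M$ together with monotonicity of $H$ along flow lines to produce the required integral curve limiting onto the critical set — at which stage Lemma~\ref{criticalpoints} finishes the argument.
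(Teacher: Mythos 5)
Your reduction to Lemma~\ref{criticalpoints} is the right strategy, and your first step (a maximal gradient/geodesic trajectory in $U$ must leave $U$, producing at least one critical point to which a trajectory converges, hence a regular one) is essentially the paper's opening move. The gap is in the step you yourself flag as the main obstacle: for an \emph{arbitrary} critical point $p$ you never actually produce an integral curve of $\grad H$ converging to $p$, and the argument you offer does not do it. Taking $q_n\in U$ with $q_n\to p$ and flowing along $-X$, each trajectory descends monotonically in $H$ until it reaches \emph{some} critical point, but that endpoint can have $H$-value strictly below $H(p)$ and lie far from $p$ --- a priori $p$ could be a degenerate saddle or degenerate local maximum (the whole point of the corollary is that we do not yet know $\Hess_pH\neq 0$), in which case nearby descending trajectories are funnelled past $p$ toward lower critical levels. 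So the assertion that ``the arc length to the critical set tends to $0$'' is unjustified; and even if it held, it would only exhibit $p$ as a limit of endpoints of trajectories, not as the endpoint of one, which is what Lemma~\ref{criticalpoints} requires. (A secondary, smaller issue, which you do acknowledge: the $\omega$-limit set of a gradient trajectory need not be a single point without further argument, whereas the lemma's hypothesis is convergence to a point; the paper sidesteps this by working with the unit-speed reparametrization $X=\grad H/\|\grad H\|$, whose integral curves are geodesics reaching the critical set at a definite finite time.)

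The paper closes the gap differently: having found one regular critical point $p$, it joins any other critical point $q$ to $p$ by a geodesic $\gamma$. By Proposition~\ref{criticalpoint}, near the regular point $p$ every geodesic emanating from $p$ is an integral curve of $X$, so $\gamma$ is an integral curve of $X$ up to the first parameter $a_1$ at which $\grad H$ vanishes; Lemma~\ref{criticalpoints} then makes $\gamma(a_1)$ regular, and the argument repeats on $]a_1,a_2[$, etc. Since regular critical points are isolated (the Hessian there is definite) and $M$ is compact, there are finitely many of them, so $q$ is reached after finitely many steps and is regular. This ``propagation of regularity along a connecting geodesic'' is the idea your proposal is missing; without it, or some substitute argument that every critical point really is the limit of a gradient trajectory, the proof is incomplete.
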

\begin{proof}
First we show the existence of a regular critical point. Note that by equation \ref{nablaX} a geodesic 
$\gamma$ through a point $q\in U$ with initial value $\gamma'(0)=X_q$ is an integral curve
of $X$ (in $U$). Since $M$ is complete, $\gamma$ is defined for all $t.$ But by definition of $X$ the
integral curve of $X$ is obviously not defined for all $t > 0$. Thus $\gamma$ does not stay
in $U$ for all time and there exists a $t_0>0$ with
$t_0:=\inf\{t>0\mid\gamma(t)\notin U\}.$ Evidently, $p:=\gamma(t_0)$ is a regular critical point by lemma \ref{criticalpoints}.\\

Let $q\in M$ be another critical point of $H,$ and $\gamma\colon[0;b]\to M$ be a geodesic from 
$p$ to $q.$ In proposition \ref{criticalpoint}  $\gamma$ is proven to be an integral curve of $X$ on the interval
$]0;a_1[,$ where $a_1:=\inf\{t\in0;b]\mid \grad_{\gamma(t)}H=0\}.$ By using lemma \ref{criticalpoints} again, 
$\gamma(a_1)$ is  a regular critical point. Thus $\gamma$ is again an integral curve of $X$
but on the interval $]a_1;a_2[,$ where $a_2$ is defined analogously to $a_1.$ As the number of regular critical points is finite, $q$ is to be reached after a finite number of steps by
an integral curve of $X.$ Thus by using lemma \ref{criticalpoints} a last time, $q$ is regular.
\end{proof}
\begin{Exa}
We end this section by giving an example of a conformally flat circle
bundle over a non-compact surface with non-constant curvature. Let 
$$M:=]1-\epsilon,1+\epsilon[\times S^1$$ for some $1>\epsilon>0,$ and
let  $P=M\times S^1$ be the circle bundle with projection 
$\pi\colon P\to M$ on the first factor. There are globally defined and commuting basis fields $\delr$ and $\delphi$ on $M$ with dual basis $\ed r$ and $\ed\varphi.$ We define a Riemannian metric on
$M$ by
$$g=\ed r\otimes\ed r+l^2(r)\ed\varphi\otimes\ed\varphi$$
where $l\colon M\to\R$ is a nowhere vanishing function which only
depends on $r.$ For any function $f\colon M\to\R$ we set
$f':=\frac{\del f}{\del r}.$
We
compute the Levi-Civita connection of $g$ with $X:=\delr$ and $Y:=\frac{1}{l}\delphi,$ then
$\nabla X=l'Y\otimes\ed\varphi$ and $\nabla Y=-l'X\otimes\ed\varphi.$ The Gaussian curvature of $M$ is given by $$K=-\frac{l''}{l}.$$
Let $H\colon M\to\R$ be a function which only depends on $r$ 
satisfying $\Hess H=H(H^2-K)\Id$ and $2K-3H^2=\alpha$ for some constant $\alpha.$ Using equation
\ref{nablaX} one easily obtains that there exists a constant $c$
such that $l=cH'.$ By putting these equations together we obtain the ODE
\begin{equation}\label{exa1}
H''=-\frac{1}{2}H^3-\frac{\alpha}{2}H.
\end{equation}
Vice versa, it is not difficult to verify that any function $H$ satisfying equation \ref{exa1}
satisfies the conditions above, too, with a constant $c$ such that $l=cH'.$
Therefore there exists a family of solutions to these equations. They are depending on the choice of $\alpha,$ 
$H(1)$ and $H'(1).$ For each solution $H$ the 
corresponding metric $g$ is determined
up to a constant $c$ via $l=cH'.$ \\

We proved the existence of a surface with 
Riemannian metric $g$ and Gaussian curvature $K$ together with a function
$H$ satisfying the equations of theorem \ref{hess}.  It remains
to show that for every solution $H$ there is a circle metric on
$\pi\colon P\to M$ such that the curvature function of the horizontal distribution is given by $H.$ To do so,  we define $T$ to be the
infinitesimal generator of the circle action on $P,$ i.e. if one uses the
product coordinates $(r,\varphi,t)$ on 
$P=]1-\epsilon,1+\epsilon[\times S^1\times S^1$ then $T=\frac{\del}{\del t}.$
Define the connection $1-$form of this circle bundle by
$$\omega=\ed t+\frac{c}{2}H^2\ed\varphi.$$
Evidently, it defines a principal connection on $P.$ Moreover,
the symmetric bilinear form $$\tilde g=\pi^*g+\omega\otimes\omega$$
is strictly positive definite and invariant under the circle action. Therefore $\tilde g$ is a circle metric. The curvature function $\tilde H$ of the horizontal
distribution is given by $$\ed\omega=\tilde H\pi^*\vol_M.$$
We have
\begin{equation*}
\begin{split}
\ed\omega&=HcH'\ed r\wedge\ed\varphi=H\pi^*\vol_M,
\end{split}
\end{equation*}
thus $\tilde H=H,$ and $(P,\tilde g)$ is in fact a conformally flat circle bundle over an oriented
surface. Using proposition \ref{Sch} the sectional curvature of
the horizontal distribution is given by
$$K-\frac{3}{4}H^2=\frac{\alpha }{2}+\frac{3}{4}H^2,$$
and is clearly non-constant unless $H$ is constant. 
\end{Exa}
\section{Classification over compact Surfaces}
We are going to classify conformally flat circle bundles over compact
oriented surfaces. 
Let $M$ be a compact surface. 
With the
same notations as in lemma \ref{fundamental} we state the following:
\begin{Lem}
The integral curves of $Y$ are complete in $U.$ Moreover they are closed.
\end{Lem}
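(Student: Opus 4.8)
The plan is to show two things: that the flow of $Y$ stays inside $U$ for all time, and that its orbits close up. The starting point is the structure equations \eqref{nablaX} together with the fact (established in Proposition~\ref{criticalpoint}) that near a regular critical point $p$ the fields $X$ and $lY$ are, respectively, $\delr$ and $\delphi$ in geodesic polar coordinates, with $2\pi l = L$ the length of the geodesic circle of radius $r$. Since $M$ is compact and $H$ is non-constant, Corollary~\ref{regularcriticalpoints} guarantees that $U$ is exhausted by such polar coordinate patches around the (finitely many) critical points: every point of $U$ lies on a unique geodesic emanating from some regular critical point $p$, and on that geodesic the coordinate $r=d(\cdot,p)$ is defined and smooth until one hits another critical point.

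First I would show completeness of the $Y$-flow inside $U$. An integral curve of $Y=\mJ X$ is, by the polar description, a curve along which $r$ is constant; i.e. it traces out a geodesic circle $\gamma_r=\{d(\cdot,p)=r\}$ minus the bad points. The only way such a curve could fail to be complete in $U$ is to run into a critical point of $H$ in finite time. But a critical point of $H$ in $\overline U$ is, by Lemma~\ref{criticalpoints} and Corollary~\ref{regularcriticalpoints}, a regular (non-degenerate) critical point $q$, and near $q$ we again have polar coordinates in which $Y$ is proportional to $\delphi$ — so the $Y$-orbit through a nearby point is the geodesic circle around $q$, which does not pass through $q$ at all. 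Hence no $Y$-orbit reaches a critical point, and $Y$ is complete in $U$. The parametrization issue (reparametrizing by arclength versus by $\varphi$) is harmless since $l$ is bounded away from $0$ and $\infty$ on each compact annulus.

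Next, closedness. Fix a regular critical point $p$. For $r$ small, $\gamma_r$ is the geodesic circle of radius $r$ about $p$, which is a closed curve; and $lY=\delphi$ has period $2\pi$ along it, by the normalization $2\pi l = L$. As $r$ increases, the identity $l = cH'$ from Proposition~\ref{criticalpoint} and the relation $L'=\tfrac{f}{h}L$ show that $l$ (equivalently $L$) stays finite and nonzero until we reach a critical value of $H$; in particular $\delphi=lY$ remains a well-defined, nowhere-vanishing, $2\pi$-periodic Killing field along each $\gamma_r$, so $\gamma_r$ remains a closed orbit of $Y$. When $r$ reaches the distance to the next critical point $q$, the level set of $H$ is that single point, and beyond it one passes into the polar chart centered at $q$; by continuity the closed orbits on one side match closed orbits on the other. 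So every $Y$-orbit is one of these closed geodesic circles, which proves the claim.

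\textbf{Main obstacle.} The delicate point is the behavior of the $Y$-orbits as one approaches a critical point $q$ of $H$ from within $U$: one must be sure that the "geodesic circle around $p$" picture and the "geodesic circle around $q$" picture are genuinely describing the same curves near the separating level set, i.e. that the $Y$-foliation of $U$ extends consistently across the critical values of $H$. This is where the finiteness of the critical set (Corollary~\ref{regularcriticalpoints}) and the non-degeneracy of each critical point — giving honest polar coordinates on both sides — do the real work; everything else is a direct reading of \eqref{nablaX} and Proposition~\ref{criticalpoint}.
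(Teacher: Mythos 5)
There is a genuine gap, and it sits exactly where you flag your ``main obstacle.'' Your argument identifies every integral curve of $Y$ with a geodesic circle $\{r=\mathrm{const}\}$ in polar coordinates centered at a critical point, but Proposition \ref{criticalpoint} establishes that picture only on a \emph{normal neighborhood} $V$ of the critical point. The claim that these patches exhaust $U$ --- that every point of $U$ sits at a well-defined radius $r$ from some critical point, that $\exp_p$ stays injective, and that the circles $\{r=r_0\}$ remain embedded closed curves all the way out to the next critical level --- is nowhere proved, and Corollary \ref{regularcriticalpoints} (which only says that all critical points are regular) does not deliver it. Non-vanishing of the Jacobi field $lY=\delphi$ rules out conjugate points but not a failure of injectivity of $\exp_p$, in which case ``the geodesic circle of radius $r_0$'' need not be a closed embedded $Y$-orbit. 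Moreover, this global product structure of $U$ is essentially the content of Proposition \ref{m=s}, which comes \emph{after} the present lemma; the phrase ``by continuity the closed orbits on one side match closed orbits on the other'' is an assertion, not an argument, so the closedness half of your proof is either circular or incomplete.

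The paper's proof is more elementary and avoids the global picture entirely. Completeness: by equation \ref{nablaX} the function $h=\parallel\grad H\parallel$ satisfies $\ed h(Y)=0$, so $h$ is a nonzero constant along each integral curve of $Y$; a unit-speed curve on compact $M$ defined on $[a,b[$ converges to a single point $p$ as $t\to b$, and $p$ cannot be a critical point since $h$ would have to vanish there --- no polar coordinates needed. Closedness: if an orbit were injective, compactness would give an accumulation point $p\in U$; near $p$ the level sets of $H$ coincide with the local $Y$-orbits, and since $H$ is constant along the orbit, it must re-enter the local orbit through $p$ infinitely often and hence retrace itself, a contradiction. Your completeness argument via the local polar picture at the limiting critical point can be repaired (once you show the curve converges to a single point), but for closedness you need either this recurrence argument or an honest proof of the global polar structure, which you do not supply.
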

\begin{proof}
First we show completeness. Let $\gamma\colon[a,b[\to U$ be an
integral curve of $Y.$ Since $M$ is compact there exists a sequence 
$(t_n)$ with $t_n<b$ and $t_n\rightarrow b$ such that $\gamma(t_n)\rightarrow p\in M.$ But
$\gamma'$ is of constant 
length $1,$ thus $\gamma(t)\rightarrow p$ for all 
$t\rightarrow b,\, t<b .$ Assume that 
$p\notin U,$ i.e. $\grad_pH=0.$  
In lemma \ref{fundamental} it was proven that 
$||\grad H(\gamma(t))||$ is constant for any integral curve of $Y,$ 
which is obviously not zero contradicting $\grad_pH=0.$\\

It remains to show that $\gamma\colon\R\to U\subset M$ is closed. If not, $\gamma$ would be injective.
As $M$ is compact, there would be a sequence $t_n\rightarrow\infty$
such that $\gamma(t_n)\rightarrow p$ for $n\rightarrow\infty.$ 
With the same arguments as above we get $p\in U.$ Since $\grad H\neq0$ on 
$U$ there is a neighborhood $V$ of $p$ such that $H(q)=H(p)$
if and only if $p$ and $q$ lie on the same integral curve of $Y.$ Because
$H$ is constant along $\gamma$ we have $H(\gamma(t_n))=H(p).$ Therefore,
$\gamma(t_n)$ and $p$ lie on the same integral curve of $Y$ for any $n$ large enough. By using $\parallel Y\parallel=1$ we have that $\gamma$
would pass $p$ infinitely often, which contradicts $\gamma$ to be injective.  
\end{proof}

\begin{Pro}\label{m=s}
Let $H\colon M\to\R$ be a solution of 
$$\Hess H=H(H^2-K)=-\frac{1}{2}(H^3+\alpha  H).$$
We have that either $H$ is constant or $M=S^2$ and there are exactly two critical points 
$N,S\in S^2$ of $H.$ In the second case
$S^2\setminus\{N,S\}\cong I\times S^1$ for some interval $I$ such that
the induced metric $g$ on $I\times S^1$ is given by
\begin{equation*}
g=dr^2+(\frac{L(r)}{2\pi})^2d\varphi^2,
\end{equation*}
where $(r,e^{i\varphi})$ are the product coordinates on $I\times S^1,$
and $L\colon I\to\R$ is the length function of circles in $S^2$
around $S.$  
Moreover, $H$ does only depend on $r$ satisfying $L(r)=cH'(r)$ for some constant
$c\neq0.$
\end{Pro}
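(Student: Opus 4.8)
The plan is to assume $H$ is non-constant and deduce the global structure from the local results already established. By Corollary \ref{regularcriticalpoints}, every critical point of $H$ is regular, hence non-degenerate, hence isolated; since $M$ is compact, $H$ has finitely many critical points, and at least a maximum and a minimum. Fix a local maximum $N$ and consider, for each point $q$ in $U$ near $N$, the geodesic emanating from $N$ through $q$; by Proposition \ref{criticalpoint} (and its proof) these geodesics are exactly the integral curves of $X=\grad H/\|\grad H\|$, and on the normal neighborhood of $N$ the polar coordinates $(r,e^{i\varphi})$ have the stated metric form $g=dr^2+(L(r)/2\pi)^2 d\varphi^2$ with $L=cH'$ (up to sign).

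Next I would follow the integral curves of $X$ outward from $N$ as far as possible. Since $M$ is complete, each such geodesic $\gamma$ is defined for all $r\ge 0$; as long as it stays in $U$ it is an integral curve of $X$ and $H\circ\gamma$ strictly increases (or I run $-X$ and it decreases). The geodesic cannot remain in $U$ indefinitely — an integral curve of the vector field $X$ cannot be complete in forward time — so it hits $M\setminus U$, i.e. a critical point, after finite $X$-time; by Lemma \ref{criticalpoints} that point is a regular critical point. The key point, which I expect to be the main obstacle, is to show that \emph{all} the outgoing geodesics from $N$ reach the \emph{same} critical point $S$, and that they do so after the same parameter value, so that the exponential-map image sweeps out precisely $S^2\setminus\{N,S\}$ with no overlap. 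I would argue this by noting that $H$ depends only on $r$ along each such geodesic and satisfies the ODE \eqref{ode} with $H'(0)=0$ and $H''(0)=f(0)$ determined by the second derivative of $H$ at $N$; since the non-degenerate critical value datum at $N$ is a single number, every geodesic from $N$ carries the same solution $H(r)$ of \eqref{ode}, so they all first return to a critical value of $H$ at the same radius $r=r_1$, and by Lemma \ref{criticalpoints} the endpoints are regular critical points sharing the critical value $H(r_1)$. A connectedness argument (the union of the open geodesic segments together with $N$ is open, and its closure adds only critical points) forces the family of endpoints to be a single point $S$.

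Having identified $N$ and $S$ as the only two critical points, I would conclude that $\exp_N$ restricted to the punctured disk of radius $r_1$ in $T_NM$ is a diffeomorphism onto $M\setminus\{N,S\}$: it is a local diffeomorphism because the Jacobi field $\partial_\varphi=lY$ computed in Proposition \ref{criticalpoint} is $l=cH'$, which is nonzero precisely on $(0,r_1)$ since $H'$ vanishes only at the critical points, so there are no conjugate points; and it is injective because distinct geodesics from $N$ stay in distinct integral curves of $X$ (level sets of $H$ intersect each integral curve of $X$ once), and no two can meet before $r_1$. Hence $M\setminus\{N,S\}\cong(0,r_1)\times S^1$, $M$ is obtained by one-point compactifying each end, so $M=S^2$, and the metric has the asserted form on $I\times S^1$ with $I=(0,r_1)$ and $L(r)=cH'(r)$. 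Finally, $H$ depending only on $r$ and the relation $L=cH'$ are exactly the content of Proposition \ref{criticalpoint}, now valid globally on $I$; this completes the proof.
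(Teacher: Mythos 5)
Your argument is correct and leans on the same machinery as the paper (Lemma \ref{criticalpoints}, Proposition \ref{criticalpoint}, Corollary \ref{regularcriticalpoints}, and the observation that $H\circ\gamma$ solves the same second-order ODE along every unit-speed radial geodesic, hence is independent of the direction). The one structural difference is in how $M=S^2$ and the count of critical points are obtained. The paper gets this upfront in one line: since $\Hess H$ is a multiple of $\Id$, every critical point is a non-degenerate extremum, so $H$ is a Morse function without saddles and Morse theory forces $\chi(M)=2$, i.e.\ $M=S^2$ with exactly two critical points; only then does it build the polar coordinates around $S$ and use $L(R)=cH'(R)=0$ to see that all radial geodesics refocus at $N$. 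You reverse the order: you build the polar decomposition first and read off $M=S^2$ and the two critical points from it. That is legitimate and more self-contained, but it shifts the burden onto two points you treat somewhat briskly. First, the coincidence of the endpoints: your connectedness argument (the set $\{\exp_N(r_1v):\|v\|=1\}$ is a connected set of critical points, and critical points are isolated) does close this gap cleanly, and is in substance the same as the paper's ``$L(R)=0$'' argument. Second, surjectivity: you need that $\exp_N(B_{r_1})\cup\{S\}$ is all of $M$, which you dispatch with ``one-point compactifying each end''; to make this airtight one should note that $\exp_N(B_{r_1})$ is open with closure $\exp_N(B_{r_1})\cup\{S\}$, so a nonempty complement would disconnect $M$ after removing the single point $S$, which is impossible for a surface. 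With those two sentences supplied, your proof is complete and buys a proof of $M=S^2$ that does not invoke Morse theory as a black box, at the cost of a slightly longer global argument.
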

\begin{proof}
Assume $H$ is not constant. This implies $M=S^2$ as a (simple) consequence of Morse
theory and the fact that the Hessian of $H$ at every critical point is strictly definite by 
$\Hess H=H(H^2-K)\Id.$\\

Let $S$ denote the absolute minimum and $N$ 
the absolute maximum of $H.$ Let $R$ be the geodesic distance from $S$ to $N.$
Fix a geodesic $\gamma\colon[0,R]\to S^2$ of shortest length from $S$ to $N$ and denote its 
parameter by $r.$ Consider the functions $H(r):=H\circ\gamma(r)$ and 
$h(r):=H'(r).$ They are related to the length $L(r)$ of the circles 
with radius $r$ around $S$ by $L(r)=c\ h(r)$ for some constant $c$ as shown in the proof of proposition \ref{criticalpoint}. 
Because of $L(R)=0$ every geodesic $\tilde\gamma$ with $\tilde\gamma(0)=S$ passes $N$ at time $R,$ i.e. $\tilde\gamma(R)=N.$
Therefore we have that $\exp_S$ is a diffeomorphism by restricting it
to $$B_R:=\{v\in T_SS^2\mid \parallel v\parallel<R\}.$$ The rest of the proof is obvious.
\end{proof}

\begin{The}\label{main}
Let $g$ be a conformally flat circle metric on a circle bundle 
$\pi\colon P\to M$ over a compact oriented surface $M.$
Then $M$ is of constant curvature $K,$ and $P$ is of constant curvature 
$H.$ Moreover, we have that $H=0$ or $M=S^2$ and we are in one of
the cases described below (\ref{constantdes}). 
\end{The}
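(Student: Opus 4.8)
The plan is to split into the two cases dictated by Proposition~\ref{m=s}: either $H$ is constant, or $M=S^2$ with exactly two critical points $N,S$ and the explicit warped-product metric $g=dr^2+(L(r)/2\pi)^2d\varphi^2$ with $L=cH'$.

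\textbf{The case $H$ constant.} If $H\equiv H_0$ then $\grad H=0$ everywhere, so the Hessian equation in Theorem~\ref{cfe} forces $H_0(H_0^2-K)\Id=0$ at every point. If $H_0\neq0$ this gives $K=H_0^2$, so $M$ has constant curvature; being compact and oriented with positive constant curvature, $M=S^2$, and then the Schouten-tensor formulas in Proposition~\ref{Sch} show all sectional curvatures of $P$ are constant (equal to $-\tfrac38H_0^2+\tfrac12K=\tfrac18H_0^2$ on horizontal planes and $-\tfrac12K+\tfrac58H_0^2=\tfrac18H_0^2$ on mixed planes, using $K=H_0^2$), so $P$ has constant curvature $H$ as claimed, and we land in the $M=S^2$ alternative. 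If $H_0=0$ we are simply in the case $H=0$; there the second equation gives $2K=\alpha$ constant, so $M$ still has constant curvature and the Schouten tensor is $-\tfrac12K\cdot(\text{mixed part})$, again constant — $P$ is flat when $K=0$, spherical-space-form-like otherwise. This half is essentially bookkeeping with the two propositions.

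\textbf{The case $M=S^2$ with non-constant $H$.} Here the real content is to show $H$ cannot actually be non-constant, i.e.\ to derive a contradiction, so that only the $H$-constant branch survives. First I would exploit the ODE \eqref{ode}, namely $2H''+H^3+\alpha H=0$ along the shortest geodesic $\gamma$ from $S$ to $N$, together with $L=cH'$ and $L\geq0$ with $L(0)=L(R)=0$ (the circles degenerate at both poles): this forces $H'(0)=H'(R)=0$ and $H'>0$ on $(0,R)$, so $H$ runs monotonically from its minimum at $S$ to its maximum at $N$. Next, using the first integral of the ODE, $\tfrac12 H'^2 = E - \tfrac18 H^4 - \tfrac\alpha4 H^2$ for some constant $E$, I would analyze the Gaussian curvature via $K=-L''/L$ (equivalently $K=-\,(cH')''/(cH')$) and impose the second conformal-flatness equation $2K-3H^2=\alpha$, which reads $-2(cH')''/(cH') = 3H^2+\alpha$, i.e.\ $-2H''' = (3H^2+\alpha)H'$. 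But differentiating $2H''+H^3+\alpha H=0$ gives $2H''' = -(3H^2+\alpha)H'$, so both equations coincide — so that route only re-confirms consistency and I need another ingredient.

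The genuinely decisive step, and the one I expect to be the main obstacle, is a global/topological constraint on $S^2$: the smooth-ness of the warped metric at the poles requires $L(r)=2\pi r+o(r)$ as $r\to0^+$ (so that the cone angle is $2\pi$) and likewise a matching expansion at $r=R$, and these smoothness conditions together with $L=cH'$ pin down the behaviour of $H$ near the poles. I would combine the Gauss--Bonnet theorem, $\int_{S^2}K\,dA=4\pi$, rewritten using $dA=L\,dr\,d\varphi/(2\pi)=\;c H'\,dr\,d\varphi/(2\pi)$ and $K=-L''/L$, to get $\int_0^R (-L'')\,dr = (L'(0)-L'(R)) = 2\pi\cdot\text{(something)}$; paired with $2K-3H^2=\alpha$ integrated against $L$, this yields $\alpha\int L\,dr + 3\int H^2 L\,dr = 2\int KL\,dr = -2\int L''\,dr$, a relation among the conserved data that, once the pole expansions are inserted, becomes over-determined and is violated unless $H'\equiv0$. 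Making the pole-regularity analysis and this integral identity genuinely force a contradiction — rather than merely being consistent — is the delicate part; everything else is assembling Propositions~\ref{Sch} and~\ref{m=s} and the ODE first integral.
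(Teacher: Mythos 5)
Your first half (the constant case) is essentially correct bookkeeping, apart from a small arithmetic slip: from $R=-S\cdot g$ the sectional curvature of a plane spanned by orthonormal $e_1,e_2$ is $S(e_1,e_1)+S(e_2,e_2)$, which with $K=H_0^2$ equals $\tfrac14H_0^2$ on every plane (consistent with the lens spaces $L(|d|,1)$ of curvature $1/d^2$ and $H=-2/d$), not $\tfrac18H_0^2$. The problem is the second half. Excluding non-constant $H$ is the entire content of the theorem, and your proposal stops exactly where the proof has to happen: you correctly note that $K=-L''/L$ together with $2K-3H^2=\alpha$ only reproduces the ODE $2H''+H^3+\alpha H=0$, and the integrated Gauss--Bonnet identity you then propose, $\int_{S^2}K\,dA=-\int_0^RL''\,dr=L'(0)-L'(R)=4\pi$, is automatically satisfied for any metric closing up smoothly at both poles, so it cannot produce a contradiction either. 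You acknowledge that making the argument ``genuinely force a contradiction'' is the delicate part and leave it undone; that is precisely the missing proof.

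The step you are missing is concrete and short. Set $A=H(0)$, $B=H(R)$ along the minimal geodesic from $S$ to $N$. First, the first integral of the ODE evaluated at $r=R$, where $H'=0$, gives $0=-(B-A)(B+A)(A^2+B^2+2\alpha)$, hence (apart from the branch $B=-A$, which the paper handles via the degree identity $\int_{S^2}H\,dA=-2\pi d=\tfrac c2(B^2-A^2)$) the relation $A^2+B^2+2\alpha=0$. Second, smoothness of the metric at the two poles gives $L'(0)=2\pi$ and $L'(R)=-2\pi$; since $L=cH'$ and $H''=-\tfrac12(H^3+\alpha H)$, these become the two cubic relations $A^3+\alpha A=-4\pi/c$ and $B^3+\alpha B=4\pi/c$. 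Adding them yields $(A+B)(A^2-AB+B^2+\alpha)=0$; if $A+B=0$ then $A^2+B^2+2\alpha=0$ forces $\alpha=-A^2$ and the first cubic reads $0=-4\pi/c$, absurd; otherwise $A^2-AB+B^2+\alpha=0$ combined with $A^2+B^2+2\alpha=0$ gives $\alpha=-AB$ and hence $(A-B)^2=0$, contradicting $A<B$. None of these three inputs --- the conservation law evaluated at the endpoint, the translation of the pole conditions into polynomial equations in $A,B,\alpha,c$, and the closing algebra --- appears in your sketch, so as written the proposal does not prove the theorem.
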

\begin{proof}
For surfaces $M$ of genus $g\geq1$ we already know that $H$ is constant, see proposition \ref{m=s}. Therefore also $K$ is constant, and we are in the case of section \ref{constantdes}.

Assume that $H$ is non-constant, and let $S$ and $N$ be the absolute minimum 
and maximum of $H,$ respectively.
Let $$\gamma\colon[0;R]\to S^2;\ t\mapsto\gamma(t)$$
be a geodesic from $S$ to $N$ of minimal length. We have already 
shown that the equations $\Hess H=H(H^2-K)$ and 
$2K=3H^2+\alpha $ turn into
\begin{equation}\label{ode}
2H''(t)+H^3(t)+\alpha H(t)=0
\end{equation}
with $H'(0)=H'(R)=0,$ where we have used $H(t)=H\circ\gamma(t)$ for short.\\

We set $H(0)=A$ and $H(R)=B.$ Then every solution of 
equation \ref{ode} satisfies
\begin{equation}\label{conslaw}
\begin{split}
4(H'(t))^2=&-H^4(t)-2\alpha H^2(t)+2\alpha A^2+A^4\\
=&-(H(t)-A)(H(t)+A)(H^2(t)+2\alpha +A^2)
\end{split}
\end{equation}
by the law of conservation. Because $H$ is the
curvature function of a complex line bundle of degree $d$ and Gauss-Bonnet,
the integrals of $H$ and $K$ are given by
\begin{equation}\label{intH}
\begin{split}
-2\pi d=\int_{S^2}HdA&=c\int_0^RH(t)H'(t)dt=\frac{c}{2}(B^2-A^2)\\
4\pi=\int_{S^2}KdA&=\frac{c}{2}\int_0^R(3H^2(t)+\alpha )H'(t)dt=\frac{c}{2}
(B^3+cB-A^3-cA).
\end{split}
\end{equation}
Note that the sign in the first equation follows is caused by the 
definition of $H,$ see equation \ref{defH}.
Since $H'(R)=0,$ the equations \ref{conslaw} and \ref{intH} imply 
\begin{equation}\label{abeqn}
0=B^2+2\alpha +A^2.
\end{equation}
With $L'(0)=2\pi$ and equation \ref{ode} we obtain
\begin{equation}\label{leqn}
\begin{split}
0&=\frac{4\pi}{c}+A^3+\alpha  A,\\
0&=\frac{4\pi}{c}+B^3+\alpha  B.
\end{split}
\end{equation}
An easy algebraic computation reveals that the equations \ref{intH},
\ref{abeqn} and \ref{leqn} have no common solution.

Thus, $H$ must be constant. Again this implies that $K$ must be constant, too, and we are in one of the cases described below.
\end{proof}

\subsection{Classification of Circle Bundles with Constant Curvature $H$}\label{constantdes}
We end by describing the space of  conformally flat circle bundles over compact surfaces of genus $g$ with constant curvatures $H$ and $K.$ As we have seen in theorem \ref{main} these are the only conformally flat circle bundles with circle metric. The only condition to the constant functions $H$ and $K$ is 
$$H(H^2-K)=0.$$ 
We first describe the bundles over the sphere, and then the bundles over higher genus surfaces:

Let $M=S^2.$ From $H(H^2-K)=0$ we have either $H=0$ or $H^2=K.$  If $H=0,$ then $\pi\colon P\to S^2$ is the principal $S^1$-bundle of a flat complex unitary line bundle $L\to S^2.$ But $S^2$ is simply connected, thus
$L=S^2\times\C$ as a unitary bundle, and $P=S^2\times S^1$ with 
the product metric. In the case $H^2=K$ one easily sees that 
$H=-\frac{2}{d},$ where $d\neq0$ is the degree of the associate line 
bundle (by the standard representation 
$\rho\colon S^1\to\C=\GL(\C)$). Examples of these bundles are the
fibrations (induced by the Hopf fibration) of the lens spaces 
$$L(|d|;1)\to S^2,$$
where $L(|d|,1)$ is equipped with the metric of constant curvature $\frac{1}{d^2},$ and the sign of the degree $d$ depends on the chosen orientation. In fact, there are no other possibilities: If there were
another bundle $P$ of the same degree $d,$ then it would have
curvature $H=-\frac{2}{d},$ too. Then the induced connection of the
endomorpism bundle between $P\times_\rho\C$ and 
$L(|d|,1)\times_{\rho^{\pm1}}\C$ would have curvature $0,$ and 
would be again the trivial bundle $S^2\times\C.$ Thus these line bundles would be isometric.
The corresponding principal bundles with the circle metric can be
considered as the space of unit length vectors in the line bundles with 
induced metric, see remark \ref{unitvectors},so they are isometric, too.

For $g\geq1$ Gauss-Bonnet implies $K\leq0,$ and we get $H=0.$ Therefore
$P$ is the total space of a principal bundle with flat connection. There are many possibilities: First of all, for each Riemann surface (of genus $1$ or greater than $1$) there is exactly one metric in the given conformal class which has constant curvature $0$ or $-C$ for any $C>0,$ and all of these surfaces are clearly not isometric. And because of the non-trivial fundamental groups of surfaces with $g\geq1,$ there are many principal $S^1$-bundles with flat connection. To classify them up to isomorphisms,
consider the associated line bundles $L=P\times_\rho\C$ with induced connection $\nabla,$ see remark \ref{unitvectors}. It is well known that 
the space of flat unitary connections on a degree $0$ line bundle modulo gauge transformations is a torus of dimension $2g:$ 
To see this note that two of them are gauge equivalent if and only if they have the same holonomy representation $H^\nabla.$ 
But $S^1$ is abelian, so $H^\nabla$ factors 
through $\pi_1(M)\to H_1(M,\Z)$ and we obtain
$$H\colon H_1(M,\Z)\to S^1.$$
The abelian Lie algebra $\mathfrak g=Harm^1(M;i\R)$ of imaginary valued harmonic $1-$forms $\omega$ can be consider as the Lie algebra of 
$G:=\Hom(H_1(M,\Z), S^1)$  with exponential map given by 
$$\omega\mapsto([\gamma]\mapsto e^{\int_\gamma\omega}).$$
The kernel of $\exp$ is the full lattice $\Gamma$ of rank $2g$ of integral harmonic forms $\omega$ with $$\int_\gamma\omega\in2\pi i\Z$$ for all
$[\gamma]\in H_1(M,\Z).$ Therefore $G=\mathfrak g/\Gamma.$ Conversely,
for any imaginary valued harmonic $1-$form $\omega$ the connection
$$\nabla:=d-\omega$$
is unitary and has $[\gamma]\mapsto e^{\int_\gamma\omega}$ as monodromy.

It remains to determine whether two unit vector bundle $P$ and $\tilde P$
corresponding to unitary line bundles with non gauge equivalent flat connections $\nabla$ and $\tilde\nabla$ over possibly different surfaces $M$ and $\tilde M$ can be isometric. 

This can only happen for dual connections $\nabla$ and $\tilde\nabla=\nabla^*$ (note that these connections correspond to different orientations on $P,$ see \ref{orientation}). For $g\geq 2$  one might see this  as follows: we have $K<0$ and proposition \ref{Sch} shows that the fiber direction $\pm T$ is given by the orthogonal complement
of the $2-$plane $\hat A\wedge\hat B$ with minimal sectional curvature.

\end{document}